\newtheorem{theorem}{Theorem}
\newtheorem{lemma}[theorem]{Lemma}
\begin{document}

\title{An evasion game on a graph}
\author{John Haslegrave \thanks{School of Mathematics and Statistics, University of Sheffield, Sheffield, UK. {\tt j.haslegrave@cantab.net}}}
\maketitle

\begin{abstract}
This paper introduces a pursuit and evasion game to be played on a connected graph. One player moves invisibly around the graph, and the other player must guess his position. At each time step the second player guesses a vertex, winning if it is the current location of the first player; if not the first player must move along an edge. It is shown that the graphs on which the second player can guarantee to win are precisely the trees that do not contain a particular forbidden subgraph, and best possible capture times on such graphs are obtained.
\end{abstract}

\section{Introduction}

Pursuit and evasion games on graphs have been widely studied. Perhaps the most significant is the Cops and Robbers game, an instance of which is a graph $G$ together with a fixed number of cops. The cops take up positions on vertices of $G$ and a robber then starts on any unoccupied vertex. The cops and the robber take turns: the robber chooses either to remain at his current vertex or to move to any adjacent vertex, and then the cops simultaneously make moves of the same form. The game is played with perfect information, so that at any time each of the players knows the location of all others. The cops win if at any point one of them is at the same position as the robber. Early results on this game include those obtained by Nowakowski and Winkler \cite{NW83} and Aigner and Fromme \cite{AF84}. An important open problem is Meyniel's conjecture, published by Frankl \cite{Fra87}, that $O(\sqrt{n})$ cops are enough to win on any $n$-vertex connected graph. More recently, several variations on the game have been analysed by Clarke and Nowakowski (e.g. \cite{CN00}).

In this paper we will consider a novel form of pursuit game, which bears some similarity to the Cops and Robbers game but differs in that the movement of the pursuer (the cat) is not constrained by the edges of the graph, and also in that the pursuer is disadvantaged by not knowing where the pursued (the mouse) is. This imperfect information will naturally lead to a different emphasis: we ask whether there is a strategy for the cat that is successful against any possible strategy for the mouse, and if so how long it takes. Another recent variation of a similar type is the Robber Locating game, introduced by Seager \cite{Sea12} and further studied by Carraher, Choi, Delcourt, Erickson, and West \cite{CCDEW}, in which a cop probes a vertex at each turn and is told the current distance to the robber.

Descriptively, we consider a connected graph to represent a network of mouse-holes connected by passageways. The cat tries to catch the mouse by inserting a paw into one of the holes; if the cat has chosen the correct hole, then the mouse is caught. After each unsuccessful attempt, the mouse moves from the hole he is currently in to any adjacent hole. A rudimentary form of this problem, asking how the cat may win on a path, appeared on an internet puzzle forum \cite{xkcd}. 

We consider the active version, in which the mouse is required to move. The cat cannot guarantee to win without this restriction (on at least two vertices), since the mouse would have at least two options at each time step, one of which avoids the cat. The active game is not feasible if there is only one vertex; clearly the cat can guarantee to catch the mouse in two attempts (by choosing the same vertex twice) on the two-vertex connected graph, and cannot do better, so we shall subsequently assume that our graph has at least three vertices.

\section{Strategies for the mouse}

We say that the mouse can survive to time $t$ on a graph $G$ if, for any sequence $c_1, \ldots, c_t$ of vertices of $G$, there exists a sequence $m_1, \ldots, m_t$ of vertices such that $m_i\neq c_i$ for every $i$, and $m_im_{i+1}$ is an edge of $G$ for $1\le i \le t-1$. We will refer to the sequence $(c_i)$ as a \textit{cat sequence} and $(m_i)$ as a \textit{mouse sequence} that beats it. For each $G$ we wish to determine whether there is a $t$ such that the mouse cannot survive to time $t$, and to determine the least such $t$ if so. Write $m(G)$ for the least such $t$, if it exists, and $m(G)=\infty$ otherwise, so that the mouse can survive to time $t$ on $G$ if and only if $m(G)>t$. The main aim of this paper is to find a necessary and sufficient condition on $G$ for $m(G)$ to be finite, and a simple formula for $m(G)$ if $G$ is such a graph.

Consider first the case where $G$ is a cycle. In this case the mouse may always survive, because at every stage he has a choice of two moves and at least one of them must be safe. Formally, given a cat sequence $c_1,\ldots,c_t$ we may inductively find a mouse sequence that beats it: take a mouse sequence $m_1,\ldots, m_{t-1}$ to beat $c_1,\ldots,c_{t-1}$ and choose for $m_t$ a neighbour of $m_{t-1}$ that is not equal to $c_t$; this is possible since there are two neighbours to choose from.

Trivially, if $H$ is a subgraph of $G$ and the mouse can survive to time $t$ on $H$ then he can survive to time $t$ on $G$ by restricting himself to making moves on $H$. Consequently the mouse can always survive if $G$ contains a cycle. If $m(G)<\infty$, then, $G$ must be a tree.

Next we shall show that there are some trees on which the mouse can always survive. Let $T^*$ be the tree consisting of three paths of length 3 with one common endpoint, with the $j$th path having vertices $u_j$, $v_j$, $w_j$ and $x$ in that order.

\begin{lemma}\label{tstar}The mouse can survive to time $t$ on $T^*$ for any $t$.
\end{lemma}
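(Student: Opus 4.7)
The plan is to take a position-set approach. Given a cat sequence $c_1, \ldots, c_t$, define $S_i$ recursively by $S_1 = V(T^*) \setminus \{c_1\}$ and $S_{i+1} = N(S_i) \setminus \{c_{i+1}\}$; then $S_i$ is precisely the set of vertices where the mouse could be at time $i$ under some valid choice of $m_1, \ldots, m_i$, and a beating mouse sequence exists if and only if $S_t \neq \emptyset$. So it suffices to prove $S_t \neq \emptyset$ for every cat sequence and every $t$.

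Since $T^*$ is bipartite with parts $A = \{x, v_1, v_2, v_3\}$ and $B = \{u_1, u_2, u_3, w_1, w_2, w_3\}$, I will restrict to mouse sequences starting at a vertex of $A$, so that $S_i \subseteq A$ for odd $i$ and $S_i \subseteq B$ for even $i$. The proposed invariant is that at each odd time $i$, the set $S_i$ contains either (i)~$x$ together with some $v_j$, or (ii)~two distinct vertices of $\{v_1, v_2, v_3\}$. The base case $i=1$ is immediate: $S_1 \supseteq A \setminus \{c_1\}$ has at least three elements, and direct inspection of the three possibilities for $c_1 \in A$ confirms that one of (i) or (ii) holds (as does the case $c_1 \notin A$).

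For the inductive step I will use the neighbourhood structure $N(x) = \{w_1, w_2, w_3\}$, $N(v_j) = \{u_j, w_j\}$, $N(u_j) = \{v_j\}$, $N(w_j) = \{v_j, x\}$ to compute $S_{i+1}$ and $S_{i+2}$. Using the $3$-fold symmetry among branches, I may assume in case (i) that $\{x, v_1\} \subseteq S_i$, which yields $\{w_1, w_2, w_3, u_1\} \subseteq N(S_i)$, and in case (ii) that $\{v_1, v_2\} \subseteq S_i$, which yields $\{u_1, w_1, u_2, w_2\} \subseteq N(S_i)$. Then a short analysis on the choice of $c_{i+1}$ shows in each subcase that $N(S_{i+1})$ contains at least one of the configurations $\{v_a, v_b, x\}$ or all of $A$, so that after removing $c_{i+2}$ we still retain either $x$ with a $v_j$ or two $v_j$'s. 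The main obstacle will be keeping this case analysis organised; the subcases where $c_{i+1}$ removes a $w_j$ other than $w_1$ are the tightest, leaving $S_{i+2} \cap A$ with only two elements. Finally, the invariant immediately gives $S_i \cap A \neq \emptyset$ at odd times, and the computations along the way show $|S_{i+1} \cap B| \geq 3$ at even times, so $S_t \neq \emptyset$ for all $t$, as required.
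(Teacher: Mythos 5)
Your argument is correct, and the case analysis you sketch does close: in case (i) with $\{x,v_1\}\subseteq S_i$ one gets $S_{i+1}\supseteq\{u_1,w_1,w_2,w_3\}\setminus\{c_{i+1}\}$, whose neighbourhood always contains $\{v_1,v_3,x\}$, $\{v_1,v_2,x\}$ or all of $A$; in case (ii) with $\{v_1,v_2\}\subseteq S_i$ one gets $S_{i+1}\supseteq\{u_1,w_1,u_2,w_2\}\setminus\{c_{i+1}\}$, whose neighbourhood always contains $\{v_1,v_2,x\}$; and any two-element subset of such a triple satisfies (i) or (ii), so the invariant propagates and $S_t\neq\emptyset$ at every time. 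However, your route is genuinely different from the paper's. The paper constructs an explicit mouse sequence: the mouse sits at $x$ at every odd time unless the cat is there, and when evicted it commits to one arm $j$, chosen by looking ahead over the entire maximal block of odd times at which the cat occupies $x$ so that both the entry move (past $c_{i-1}$) and the eventual return move (past $c_{i+2k+1}$) avoid $w_j$; this requires unbounded lookahead but produces a concrete strategy whose structure is reused almost verbatim in Case 2 of Lemma \ref{cmlb}. Your proof instead runs the standard reachable-set recursion $S_{i+1}=N(S_i)\setminus\{c_{i+1}\}$ and maintains a structural invariant on $S_i\cap A$ at odd times. What this buys is locality: each inductive step consults only $c_{i+1}$ and $c_{i+2}$, so the whole proof reduces to a finite, mechanical check with no need to reason about arbitrarily long blocks of cat moves; the cost is that the surviving mouse sequence is only extracted implicitly by tracing back through the sets, and the argument is more specific to the fixed graph $T^*$, so it does not transfer as directly to the general lower-bound argument of Lemma \ref{cmlb} the way the paper's template does. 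Both proofs rest on the same underlying picture --- the mouse oscillates between $\{x,v_1,v_2,v_3\}$ and the other bipartition class, keeping two escape options alive --- but yours certifies this by an invariant rather than by an explicit strategy.
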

\begin{proof}
Given a cat sequence $c_1, \ldots, c_t$ on $T^*$, we shall construct a mouse sequence $m_1, \ldots, m_t$ that beats it. The key idea is that at every odd time the mouse
will be at $x$ or at distance 2 from $x$, and it will be $x$ whenever possible. We shall show that when the mouse is forced away from $x$ he may choose one of the three arms of $T^*$ to move down in such a way that he will be able to return to $x$ once it is safe to go back there (though he may need to know the cat sequence arbitrarily far in advance in order to make the correct choice).

It suffices to prove the assertion for odd $t$, since when $s$ is even we shall then have proved that the mouse can survive to time $s+1$, and consequently to time $s$. The argument does not depend on $t$ being odd, but we construct even terms of the mouse sequence from the neighbouring odd terms, so this will avoid having a separate case for the final term.

Set $m_i=x$ for every odd $i$ such that $c_i\neq x$. We must now choose a suitable value of $m_i$ for every odd $i$ with $c_i=x$. For each such $i$, we must have $m_i=u_j$, $m_i=v_j$, or $m_i=w_j$ for some $j$. We divide such $i$ into maximal subsequences of consecutive odd terms of the cat sequence taking the value $x$. Within each such group we ensure that we consistently choose the same value of $j$, since the mouse must go down one arm and may not return to $x$ for the duration of this group. For each $i$ and $k$, with $i$ odd, such that $c_i=c_{i+2}=\cdots=c_{i+2k}=x$ but $c_{i-2}\neq x$ (or $i=1$) and $c_{i+2k+2}\neq x$ (or $i+2k=t$) we choose $j\in \{1,2,3\}$ such that $c_{i-1}\neq w_j$ and $c_{i+2k+1}\neq w_j$. This is possible since at most two out of the three values are not permitted. Now set $m_i=m_{i+2}=\cdots=m_{i+2k}=v_j$. This choice ensures that the mouse can safely move from $x$ to $v_j$ and back again when required.

We have now defined $m_i$ for all odd $i$. If $i$ is even and $m_{i-1}=m_{i+1}=x$, then choose any $j$ with $c_i\neq w_j$ and set $m_i=w_j$. If $i$ is even and $m_{i-1}=m_{i+1}=v_j$ for some $j$, then set $m_i=u_j$ or $m_i=w_j$, whichever is not equal to $c_i$. By our construction of $m_i$ for odd $i$, the only other possibility for even $i$ is that one of $m_{i-1}$ and $m_{i+1}$ is $x$ but the other is $v_j$ for some $j$; in that case either $c_{i-1}=x\neq c_{i+1}$ or $c_{i+1}=x\neq c_{i-1}$. Our choice of odd mouse values then implies that $c_i\neq w_j$ and we can take $m_i=w_j$. In every case we have chosen $m_i$ for even $i$ to be adjacent to $m_{i-1}$ and $m_{i+1}$, and $m_i\neq c_i$ for all $i$, as required.
\end{proof}

In fact $T^*$ is essentially the only example of a tree on which the mouse can survive: we shall show that the cat can always catch the mouse on any tree that does not have $T^*$ as a subgraph. We refer to such trees as $T^*$-{\em free}.

Before giving a strategy for the cat to win on any $T^*$-free tree $T$ with at least three vertices, we prove a lower bound on $m(T)$. We will show later that this bound is equal to $m(T)$ when $T$ is $T^*$-free. The key idea in defining the lower bound is to consider how often the cat must visit each vertex.

Let $T$ be a tree with at least three vertices, and let $v$ be a vertex of $T$. Define $\tilde{d}(v)$ as the number of neighbours of $v$ that are not leaves. Define $a(v)$ for $v\in V(T)$ as follows:
\[
a(v)=
\begin{cases}
2\tilde{d}(v)-2 &\text{if } \tilde{d}(v)\ge 2\,; \\
2 &\text{if } d(v)\ge 2 \text{ but } \tilde{d}(v)<2\,; \\
0 &\text{if } d(v)=\tilde{d}(v)=1\,.
\end{cases}
\]
Let $A(T)=\sum_v a(v)$. If $d(v)=1$ but $\tilde{d}(v)=0$, then $T$ is the two-vertex tree, which we have already excluded.

\begin{lemma}\label{cmlb}If $T$ is a tree with at least three vertices, then $m(T)\ge A(T)$.
\end{lemma}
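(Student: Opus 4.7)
The plan is to show that for every cat sequence $c_1,\ldots,c_t$ with $t<A(T)$ there is a mouse sequence beating it, which is equivalent to $m(T)\geq A(T)$. To track this cleanly I will use the forward reachability sets $F_i\subseteq V(T)$ defined by $F_1=V(T)\setminus\{c_1\}$ and $F_{i+1}=N_T(F_i)\setminus\{c_{i+1}\}$: a valid mouse sequence of length $t$ exists if and only if $F_t\neq\emptyset$, so it suffices to show that $F_t$ remains non-empty for every $t<A(T)$.

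The main tool will be a potential function $\Phi\colon 2^{V(T)}\to\mathbb{Z}$ satisfying: (i) $\Phi(V(T)\setminus\{v\})\geq A(T)-1$ for every $v\in V(T)$; (ii) $\Phi(N_T(F)\setminus\{c\})\geq\Phi(F)-1$ for every $F\subseteq V(T)$ and every $c\in V(T)$; and (iii) $\Phi(\emptyset)\leq 0$. Given such a $\Phi$, a straightforward induction gives $\Phi(F_i)\geq A(T)-i$, which forces $F_t\neq\emptyset$ whenever $t<A(T)$. I will construct $\Phi$ as a sum of per-vertex contributions, with each non-leaf $v$ contributing up to $a(v)$; the contribution at a branching vertex $v$ with $\tilde{d}(v)\geq 2$ will be controlled by how many of $v$'s non-leaf neighbors remain ``in play'' in $F$ or $N_T(F)$, capturing the intuition that such vertices are harder to clear from reachability, while the contribution at a vertex with $d(v)\geq 2$ but $\tilde{d}(v)<2$ will simply record whether $v$ and its unique non-leaf branch are still active.

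The principal obstacle will be verifying the unit-decrease property (ii). When the removed vertex $c$ is branching with $\tilde{d}(c)\geq 2$, deleting $c$ from $N_T(F)$ can in principle affect the contributions at several nearby vertices at once, so bounding the overall drop by one requires a careful local case analysis around $c$, split according to whether $c$ and each non-leaf neighbor of $c$ lie in $F$ or in $N_T(F)$. I expect to exploit the fact that $T$ is bipartite, so that $F$ and $N_T(F)$ sit in opposite parts of the bipartition, in order to rule out pathological interactions. If the direct potential argument proves too intricate, a fallback is induction on $|V(T)|$: remove a longest pendant path of $T$ ending at a branching vertex and verify that the resulting changes in $m(T)$ and $A(T)$ are compatible, so that the inductive bound propagates up to the full tree.
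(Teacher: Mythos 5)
Your proposal is a plan rather than a proof: the entire mathematical content of the lemma is deferred to the construction of the potential function $\Phi$ and the verification of property (ii), and neither is carried out. You correctly set up the reachability formulation ($F_1=V(T)\setminus\{c_1\}$, $F_{i+1}=N_T(F_i)\setminus\{c_{i+1}\}$, survival iff $F_t\neq\emptyset$) and correctly observe that a potential satisfying (i)--(iii) would finish the argument, but you describe the per-vertex contributions only in vague terms (``controlled by how many of $v$'s non-leaf neighbors remain in play''), give no definition, and explicitly concede that the crucial unit-decrease property is an unresolved obstacle, offering an unexecuted fallback. That is a genuine gap, not a detail. Moreover, one of the few concrete claims you do make is wrong: since $F_1$ omits only a single vertex, the sets $F_i$ contain vertices of both colour classes, so it is false that ``$F$ and $N_T(F)$ sit in opposite parts of the bipartition.'' To use parity you would have to split each $F_i$ by the parity of the distance to a reference vertex and track the two halves separately, which you have not done.

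For comparison, the paper proves something stronger and more local: any unbeatable cat sequence must visit each individual vertex $v$ at least $a(v)$ times, whence the length is at least $\sum_v a(v)=A(T)$. The proof is constructive on the mouse's side: given a cat sequence visiting some $v$ fewer than $a(v)$ times, one exhibits a mouse sequence that shelters at $v$ (or at a neighbour of $v$) except during the at most $\tilde{d}(v)-2$ odd-indexed probes of $v$ of one parity class, during which the mouse retreats down a carefully chosen internal branch $u_j$ and returns; the count $a(v)=2\tilde{d}(v)-2$ is exactly what makes such a safe branch available. If you want to salvage your approach, note that a potential argument of the form you describe can only recover the aggregate length bound, not the per-vertex visit counts, and the real work --- which your proposal leaves entirely undone --- is isolating what quantity decreases by at most one per probe.
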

\begin{proof}
We shall prove the stronger statement that, for each vertex $v$, an unbeatable cat sequence must visit $v$ at least $a(v)$ times. An unbeatable sequence must therefore have length at least $\sum_v a(v)=A(T)$.

Let $c_1,\ldots,c_t$ be a cat sequence that visits some vertex $v$ at most $a(v)-1$ times; we aim to construct a mouse sequence that beats it. Since $a(v)\ge 1$ (in fact $a(v)\ge 2$ since it is never defined to be 1), $d(v)\ge 2$. Our strategy is for the mouse to stay at or adjacent to $v$ as much as possible. As in the proof of Lemma \ref{tstar}, we must then show that when he is forced to leave he can choose a direction that will allow him to remain safe until it is possible to return. We distinguish two cases according to the value of $a(v)$.

\vspace{5pt}
\textbf{Case 1.} $a(v)=2$.

In this case there is a winning mouse sequence that always stays at or adjacent to $v$. Since $c_i=v$ for at most one value of $i$, either $c_i\neq v$ for all odd $i$ or $c_i\neq v$ for all even $i$. Assume without loss of generality the former, and let $m_i=v$ for every odd $i$. Let $u$ and $w$ be two neighbours of $v$. For each even $i$, let $m_i=u$ if $c_i\neq u$ and $m_i=w$ if $c_i=u$. This is a valid mouse sequence and $m_i\neq c_i$ for each $i$, as required.

\vspace{5pt}
\textbf{Case 2.} $a(v)>2$.

In this case $\tilde{d}(v)>2$.  Write $r=\tilde{d}(v)$ and let $u_1,\ldots,u_r$ be the internal neighbours of $v$; for each $1\le j\le r$ let $w_j$ be a neighbour of $u_j$ not equal to $v$. Again we try to stay at or adjacent to $v$. When we are forced away from it we will choose one of the $w_j$ to move to; we need to show that there is one we can move to safely. 

Since $a(v)-1=2r-3$, the sequence $(c_i)$ visits $v$ at most $2r-3$ times. Hence either $c_i=v$ for at most $r-2$ odd values of $i$ or $c_i=v$ for at most $r-2$ even values of $i$; assume without loss of generality the former. Let $m_i=v$ for every odd $i$ with $c_i\neq v$. Next we choose a suitable value of $m_i$ for every odd $i$ with $c_i=v$. We divide such $i$ into maximal subsequences of consecutive odd terms of the cat sequence taking the value $v$. Within each such group we consistently choose the same vertex. For each $i$ and $k$, with $i$ odd, such that $c_i=c_{i+2}=\cdots=c_{i+2k}=v$ but $c_{i-2}\neq v$ (or $i=1$) and $c_{i+2k+2}\neq v$ (or $i+2k+2>t$), we choose $j$ with $1\le j\le r$ such that $c_{i-1},c_{i+1},\ldots,c_{i+2k+1}\neq u_j$. This is possible since $k\le r-3$ by assumption, and so at most $r-1$ of the possible $j$ are excluded by our condition. Now set $m_i=m_{i+2}=\cdots=m_{i+2k}=w_j$. 

We have now defined $m_i$ for all odd $i$. If $i$ is even and $m_{i-1}=m_{i+1}=v$ (or $m_{i-1}=v$ and $i+1>t$), then choose any $j$ with $c_i\neq u_j$ and set $m_i=u_j$. If $i$ is even and $m_{i-1}=w_j$ or $m_{i+1}=w_j$ for some $j$, then set $m_i=u_j$. This is consistent, since if $m_{i-1}=w_j$ and $m_{i+1}=w_l$ then, by choice of $m_i$ for odd $i$, $c_{i-1}=c_{i+1}=v$ and so $j=l$. This is a valid mouse sequence and $m_i\neq c_i$ for each $i$, as required.
\end{proof}

We now observe that in certain cases removing a leaf from $G$ does not change $m(G)$, which will allow us to reduce any given tree to a tree of a certain form on which the mouse can survive for the same length of time. 

\begin{lemma}\label{prune}Let $G$ be a tree and $u$ be a leaf whose neighbour, $v$, has degree at least 3. If $H$ is the tree obtained from $G$ by deleting $u$, then $m(H)=m(G)$.
\end{lemma}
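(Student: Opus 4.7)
The plan is to verify the two inequalities $m(G)\ge m(H)$ and $m(G)\le m(H)$ separately.

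For $m(G)\ge m(H)$, I would use (a small refinement of) the subgraph observation already made in the paper. Given any cat sequence $c_1,\ldots,c_t$ on $G$, define an auxiliary sequence $c'$ on $H$ by $c'_i=c_i$ when $c_i\in V(H)$ and $c'_i=v$ otherwise. If the mouse survives to time $t$ on $H$, take a mouse sequence $m$ in $H$ beating $c'$. Then $m_i\ne c_i$ for each $i$: when $c_i\in V(H)$ this is because $m_i\ne c'_i=c_i$, and when $c_i=u$ it is because $m_i\in V(H)$ so $m_i\ne u$. Hence the mouse also survives to time $t$ on $G$.

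For $m(G)\le m(H)$, I would push a surviving mouse strategy from $G$ into $H$. Take any cat sequence $c_1,\ldots,c_t$ on $H$, view it as a cat sequence on $G$, and let $m_1,\ldots,m_t$ be a mouse sequence on $G$ that beats it. Whenever $m_i=u$, the fact that $v$ is the only neighbour of $u$ forces $m_{i-1}=v$ (if $i>1$) and $m_{i+1}=v$ (if $i<t$). Since $d(v)\ge 3$, the vertex $v$ has at least two neighbours in $H$, so there is some $x\in N_H(v)$ with $x\ne c_i$; I would replace $m_i$ by this $x$. This keeps the sequence a valid walk (adjacency to $v$ is preserved) and still avoids the cat at time $i$, while leaving every other coordinate unchanged. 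Since visits to $u$ cannot occur at two consecutive times, these substitutions are independent and together produce a mouse sequence in $H$ that beats $c$.

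The main obstacle, and the only place the degree hypothesis is used, is in this substitution step: if $d(v)$ were only $2$, then $v$ would have a unique neighbour in $H$, and in the unlucky case where $c_i$ equals that neighbour no valid replacement would exist. Apart from this, the argument is routine bookkeeping.
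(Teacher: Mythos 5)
Your proposal is correct and follows essentially the same route as the paper: the easy direction is the subgraph observation (your projection of the cat sequence onto $H$ just makes this slightly more careful), and the key direction replaces each visit of the mouse to $u$ by another neighbour of $v$ in $H$ avoiding $c_i$, which exists precisely because $d(v)\ge 3$. The only cosmetic difference is that you perform all substitutions at once (justified by noting $u$ cannot occur at consecutive times), whereas the paper phrases the same replacement as a minimality argument on the number of occurrences of $u$.
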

\begin{proof}
Certainly $m(H)\le m(G)$, since $H$ is a subgraph of $G$. Now suppose that the mouse can survive to time $t$ on $G$, and let $c_1,\ldots,c_t$ be a cat sequence on $H$. It is also a cat sequence on $G$, so there are mouse sequences on $G$ that beat it; take $m_1,\ldots,m_t$ to be such a mouse sequence with the fewest occurrences of $u$. If $m_i\neq u$ for every $i$, then $m_1,\ldots,m_t$ is also a mouse sequence on $H$, so we are done. If $m_i=u$, then $m_{i-1}=m_{i+1}=v$ since $v$ is the only neighbour of $u$. Note that $v$ has at least two other neighbours, at least one of which is not $c_i$; let $w$ be such a neighbour. The sequence given by $m'_j=m_j$ for $j\neq i$ and $m'_i=w$ is also a mouse sequence that beats $c_1,\ldots,c_t$, but it has fewer occurrences of $u$, contradicting minimality. Thus also $m(H)\ge m(G)$.
\end{proof}

Starting from a given tree $T_0$ (with at least three vertices and so at least one internal vertex), define a sequence $T_0,T_1,\ldots$ by removing a leaf of $T_i$ adjacent to a vertex of degree at least 3 to obtain $T_{i+1}$. Do this until no such leaves remain in $T_r$. Lemma \ref{prune} yields $m(T_0)=m(T_1)=\cdots=m(T_r)$. Since we never remove a neighbour of a vertex of degree 2, we do not create any new leaves by this process. Hence all internal vertices of $T_0$ remain internal in $T_r$, and thus the neighbour of any leaf in $T_r$ has degree 2. Let us call such a tree a \textit{pruned tree}, and define a \textit{twig} to be a vertex $v$ such that $d(v)\ge 2$ but at least $d(v)-1$ of its neighbours are leaves. In a pruned tree that is not a star, every twig has degree 2, and no vertex other than a twig has a leaf neighbour. Thus if $T_0$ is not a star and has $n$ vertices, $t$ twigs and $l$ leaves, then the resulting pruned tree has $n+t-l$ vertices.

\section{Strategies for the cat}

We shall now show how to construct a winning strategy for the cat on any $T^*$-free tree, thus obtaining an upper bound for $m(T)$ for each such $T$. The two bounds coincide: the strategy we construct for $T$ takes time $A(T)$, so the bound in Lemma \ref{cmlb} shows that our strategy is optimal. We first find a more convenient condition that is equivalent to being $T^*$-free.

\begin{lemma}\label{cmpath}A tree $T$ is $T^*$-free if and only if it contains a path $P$ such the maximum distance of a vertex from $P$ is at most 2.
\end{lemma}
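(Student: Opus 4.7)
I would prove the equivalence by contraposition in each direction.

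For the forward direction, suppose $T^*$ embeds in $T$ with centre $x$ and arms $x w_j v_j u_j$ for $j=1,2,3$, and let $P$ be any path in $T$. The three neighbours $w_1,w_2,w_3$ of $x$ lie in three distinct components of $T-x$. If $x\in P$, then $P-x$ has at most two components, each contained in a single component of $T-x$; hence some arm's component contains no vertex of $P$, and the corresponding leaf $u_j$ satisfies $d(u_j,P)\ge d(u_j,x)=3$. If $x\notin P$, then $P$ lies in a single component of $T-x$, so at least two arms miss $P$ entirely, and for such an arm $d(u_j,P)\ge d(u_j,x)+d(x,P)\ge 3+1=4$. Either way $P$ fails the distance condition.

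For the converse, I would take $P=v_0 v_1\cdots v_k$ to be a longest path in $T$ and suppose for contradiction that some vertex $u$ has $d:=d(u,P)\ge 3$. Let $v_i$ be a closest vertex of $P$ to $u$ and write the unique $u$--$v_i$ path as $u=y_0,y_1,\ldots,y_d=v_i$; by choice of $v_i$ none of $y_0,\ldots,y_{d-1}$ lies on $P$. Joining this path at $v_i$ to either end of $P$ yields paths of lengths $d+i$ and $d+(k-i)$, so maximality of $P$ forces $i\ge d\ge 3$ and $k-i\ge d\ge 3$. Then the three length-$3$ paths
\[
v_i v_{i-1} v_{i-2} v_{i-3}, \qquad v_i v_{i+1} v_{i+2} v_{i+3}, \qquad v_i y_{d-1} y_{d-2} y_{d-3}
\]
are internally disjoint (the first two share only $v_i$ as they are subpaths of $P$; the third meets $P$ only at $v_i$), and together they form a copy of $T^*$, contradicting the hypothesis.

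The main subtlety is the converse, where choosing $P$ to be a \emph{longest} path is essential on two counts: it provides enough room in both directions along $P$ to supply the two length-$3$ sub-paths $v_iv_{i-1}v_{i-2}v_{i-3}$ and $v_iv_{i+1}v_{i+2}v_{i+3}$, and it is what rules out the otherwise-annoying case in which $v_i$ is close to an endpoint of $P$ (which would allow one to extend $P$ using the detour through $u$). The forward direction is a clean case split and presents no real obstacle beyond being careful that, when $x\in P$, at most two of the three components of $T-x$ containing the $w_j$ can contribute vertices to $P$.
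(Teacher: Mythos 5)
Your proof is correct and follows essentially the same route as the paper: contraposition in both directions, with the converse using a longest path whose maximality forces it to extend three vertices either side of the attachment point, yielding a copy of $T^*$. Your forward direction merely fills in the component-counting details that the paper leaves as a one-line assertion.
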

\begin{proof}
We prove the contrapositive for both implications.  If $T^*\subseteq T$, then for any path $P$ in $T$, some leaf of $T^*$ has distance at least 3 from $P$.  Conversely, let $P$ be a longest path in $T$, and let $w$ be a vertex having distance 3 from $P$. Let $x$ be the vertex where the path from $w$ meets $P$.  Since $P$ is a longest path, it extends at least three vertices in each direction from $x$. Together with the path from $w$ to $x$, this subpath forms $T^*$.
\end{proof}

\begin{lemma}\label{cmub}If $T$ is a $T^*$-free tree with at least three vertices, then $m(T)\le A(T)$.
\end{lemma}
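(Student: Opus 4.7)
My plan is to reduce to a pruned tree and then exhibit an explicit two-sweep cat sequence of length $A(T)$. First, I verify that $A$ is invariant under the pruning operation of Lemma~\ref{prune}: deleting a leaf $u$ whose neighbour $v$ has $d(v)\ge 3$ removes an $a$-value of $0$, and leaves $\tilde d(v)$ (and the ``internal'' status of $v$) unchanged, so $a(v)$ is unaffected. Iterating, we may assume $T$ is pruned. Combined with $T^*$-freeness (preserved by taking subgraphs) and Lemma~\ref{cmpath} applied to a longest path $P=v_0v_1\cdots v_k$, the structure of a pruned $T^*$-free tree with at least three vertices is forced: $v_0,v_k$ are leaves, $v_1,v_{k-1}$ have degree $2$, and each interior spine vertex $v_i$ (for $2\le i\le k-2$) carries $t_i\ge 0$ length-$2$ branches $v_i-u_{i,\alpha}-w_{i,\alpha}$ with $u_{i,\alpha}$ a twig and $w_{i,\alpha}$ a leaf; the star case reduces to $P_3$. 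Writing $M=\sum_i t_i$, a direct calculation gives $A(T)=2(k-1)+4M$.

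The cat plays two mirror sweeps. In the forward sweep, for $i=1,\ldots,k-1$ in order, the cat begins its ``period at $v_i$'' by visiting $v_i$, interleaves the detour $u_{i,1},v_i,u_{i,2},v_i,\ldots,u_{i,t_i},v_i$ (if $t_i\ge 1$), then proceeds to $v_{i+1}$. The backward sweep is the reverse. Each sweep has length $(k-1)+2M$, so the total length is exactly $A(T)$. Setting $\tau_i=i+2\sum_{j=2}^i t_j$ to be the time at the end of the $i$th period in the forward sweep, a direct check shows that every $c_t$ in the forward sweep has the same parity as $t$, so the forward sweep targets mice starting at parity-$0$ vertices and (symmetrically) the backward sweep targets mice starting at parity-$1$ vertices.

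To prove the forward sweep beats every parity-$0$-starting mouse, I would track $S_t^W$, the set of vertices where such a mouse could possibly be at time $t$, and prove by induction on $i$ that $S^W_{\tau_i}\subseteq R_i$, where $R_i=\{x\in V(T):\sigma(x)\ge i+1\}$ and $\sigma(x)$ is the index of the spine vertex of the subtree containing $x$. The key structural fact is $N(R_{i-1})\subseteq R_{i-1}\cup\{v_{i-1}\}$; combined with parity, the move $c_{\tau_{i-1}+1}=v_i$ confines $S^W$ to $\{w_{i,\alpha}:1\le\alpha\le t_i\}\cup R_i$. Each subsequent detour pair $u_{i,\alpha},v_i$ then removes $w_{i,\alpha}$ from $S^W$, since a mouse at $w_{i,\alpha}$ has only $u_{i,\alpha}$ as a neighbour and so walks into the cat. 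After all $t_i$ detours, only $R_i$ (restricted to the correct parity) remains, completing the induction. At $i=k-1$ we have $R_{k-1}=\{v_k\}$, which is parity-excluded, so $S^W_{\tau_{k-1}}=\emptyset$; the backward sweep kills $S^B$ by the mirror argument.

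The main obstacle is the bookkeeping of how $S^W$ evolves across the detour block: one has to unwind the $1+2t_i$ moves at $v_i$ and check that the twig-leaves $w_{i,\alpha}$ are eliminated in sequence while $R_i$ is preserved as an upper bound. The parity arrangement is what makes the two sweeps decouple, since every cat move in the forward sweep sees $S^W$ at matching parity but $S^B$ at opposing parity (and vice versa for the backward sweep), so neither sweep interferes with the other's target set.
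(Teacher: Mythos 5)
Your proposal is correct, and the cat strategy is essentially the paper's: two parity-separated sweeps along a path, interleaving each path vertex with its internal neighbours, for a total length of $A(T)$, with the second sweep handling the opposite-parity mouse by reversal. The differences are organisational rather than substantive: you prune first (which requires, and you correctly verify, that $A$ is invariant under the leaf-deletion of Lemma~\ref{prune}, a fact the paper only exploits implicitly in Theorem~\ref{cmdone}), you use a longest path rather than the paper's shortest $2$-dominating path, and you certify the sweep by tracking the shrinking set of possible mouse positions instead of the paper's component-confinement and crossing argument; all of these steps are sound.
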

\begin{proof}
We will first choose a path as guaranteed Lemma \ref{cmpath}. The cat will move along this path, checking each vertex and its internal neighbours before moving on. We will consider the parity of the mouse's starting position, and show that we catch a mouse of the right parity by doing this; we catch a mouse of the other parity by a second pass of the same form back along the path.

To make the strategy easier to define, we wish to choose our path so that it contains no twig. We show first that this is possible for all but a small class of graphs; we deal with this class separately.

Let $P$ with vertices $v_1,\ldots,v_r$ be a shortest path among those having the property that all vertices of $T$ are within distance 2 of the path. If $P$ contains more than one vertex, then there must exist a vertex $x$ with $d(x,v_1)=2$ and $d(x,v_i)>2$ for $i\neq 1$, for otherwise $P-v_1$ is a shorter path with the required property. Consequently the common neighbour of $x$ and $v_1$ is an internal vertex not equal to $v_2$, and $v_1$ is itself an internal vertex. Likewise $v_r$ has an internal neighbour not equal to $v_{r-1}$. Since each vertex on the path is now an internal vertex, $\tilde{d}(v_i)\ge 2$ for each $i$. 

If $P$ consists of one vertex $v$, then $\tilde{d}(v)\ge 2$, $\tilde{d}(v)=0$, $\tilde{d}(v)=d(v)=1$, or $\tilde{d}(v)=1<d(v)$. In the first case $P$ has no twig, as desired. In the second case $T$ is a star with central vertex $v$. In the third case it is a star whose central vertex is the neighbour of $v$. In the fourth case $T$ is the \textit{double-star} consisting of two adjacent internal vertices $v$ and $w$ and some leaves. If $T$ is a star, then $m(T)=2\ge A(T)$ since the mouse must occupy the central vertex either at time 1 or time 2, so the cat can win by choosing the central vertex twice. If $T$ is a double-star with internal vertices $v$ and $w$, then $A(T)=4$; we claim that the cat sequence $v,w,w,v$ is unbeatable. Suppose not and let $m_1,m_2,m_3,m_4$ be a mouse sequence that beats it. Since $c_1=v$ and $c_2=c_3=w$, we have $m_1$ not a neighbour of $w$, and $m_1=w$ implies $m_2=v$.  Hence in each case $m_2=v$, and then $c_3=w$ implies that $m_3$ is a leaf neighbour of $v$ and the cat wins at time 4.

If $T$ is not a star or a double-star, then either $P$ has at least two vertices or has one vertex $v$ with $\tilde{d}(v)\ge 2$. In either case we have $\tilde{d}(v_i)\ge 2$ for $1\le i\le r$. Also, if $w$ is not on $P$, then $\tilde{d}(w)<2$, since otherwise $w$ has some internal neighbour $x$ with $d(x,P)=d(w,P)+1$. Similarly, $x$ has a neighbour $y$ other than $w$, and $d(y,P)=d(x,P)+1=d(w,P)+2>2$, contradicting the choice of $P$.

For such trees, the cat plays as follows. For each $i$, write $b_i=\tilde{d}(v_i)$ and let $w_{i,1},w_{i,2},\ldots,w_{i,b_i}$ be the internal neighbours of $v_i$, ordered with $v_{i-1}$ first (if $i\neq 1$) and $v_{i+1}$ last (if $i\neq r$). Let $B_1=2$ and $B_i=2+\sum_{j<i}(2b_j-3)$ for $i>1$. For each $i$, let $c_{B_i}=c_{B_i+2}=\cdots=c_{B_i+2b_i-4}$, and let $c_{B_i+2k-3}=w_{i,k}$ for $1\le k\le b_i$. Note that, since $B_i+2b_i-4=B_{i+1}-1$ and by our choice of order, which specified $w_{i,1}=v_{i-1}$ (if $i\neq 1$) and $w_{i,b_i}=v_{i+1}$ (if $i\neq r$), this definition is consistent and defines $c_s$ for $1\le s\le t$ where $t=B_r+2b_r-3$; let $c_s=c_{2t+1-s}$ for $t+1\le s \le 2t$. In the cat sequence $c_1,\ldots,c_{2t}$, each vertex $v_i$ on the path is visited $2\tilde{d}(v_i)-2$ times, and each internal vertex not on the path is visited twice, so $2t=A(T)$. 

We claim that this sequence is unbeatable. Since the mouse must always move to a neighbouring vertex, its distance from $v_1$ must change by 1 at each time step. Consequently, either $d(m_s,v_1)$ has the same parity as $s$ for every $s$, or $d(m_s,v_1)$ has opposite parity to $s$ for every $s$. We distinguish two cases according to the parity of the mouse.

\vspace{5pt}
\textbf{Case 1.} $d(m_s,v_1)$ has the same parity as $s$ for each $s$.

Our aim is to show that parity considerations force the mouse to stay on the same side of the cat to avoid being caught, so that as the cat sequence moves along the path the mouse will be forced into a successively smaller region until he is cornered.

Note that $d(c_s,v_1)$ has the same parity as $s$ for $1\le s \le t$, so $d(m_s,c_s)$ is even for $1\le s \le t$. $c_s$ first takes the value $v_i$ when $s=B_i$ and last takes this value (in the range $1\le s \le t$) when $s=B_i+2b_i-4$. Consider the components of $T-v_i$ containing $m_s$ and $m_{s'}$ for $B_i-1\le s\le B_i+2b_i-3$ and $B_i-1\le s'\le B_i+2b_i-3$. If these are different (or either $m_s$ or $m_{s'}$ is $v_i$), then $m_{s''}=v_i$ for some ${s''}$ such that $B_i-1\le s''\le B_i+2b_i-3$, but the specified cat sequence yields $d(c_{s''},v_i)\le 1$ and so $d(c_{s''},m_{s''})\le 1$. Since $d(c_{s''},m_{s''})$ is even, the mouse is caught. 

We have shown that in order to avoid capture the mouse must be in some component $C_i$ of $T-v_i$ whenever $B_i-1\le s\le B_i+2b_i-3$. If $C_i$ is neither the component containing $v_{i-1}$ nor the component containing $v_{i+1}$, then each vertex in $C_i$ is nearer to $v_i$ than to any other vertex of $P$, so each vertex in $C_i$ has distance at most 2 from $v_i$. Since $C_i$ contains only one neighbour $w$ of $v_i$, any two vertices in $C_i$ are adjacent. Since the mouse remains in $C_i$ for at least three consecutive terms of the sequence, $C_i$ contains more than one vertex; hence $w$ is an internal vertex. The distance $d(m_s,c_s)$ is even when $B_i-1\le s\le B_i+2b_i-3$, but for some $s'$ in this range $c_{s'}=w$ (since $c_{B_i-1},\ldots,c_{B_i+2b_i-3}$ are the internal neighbours of $v_i$ in some order), so $d(m_{s'},c_{s'})\le 1$. Since $d(c_{s'},m_{s'})$ is even, the mouse is caught. 

Therefore, if the mouse is not caught, then always $C_i$ contains $v_{i-1}$ or $v_{i+1}$. Since $C_1$ contains $v_2$, and $C_r$ contains $v_{r-1}$, there exists $j$ such that $C_j$ contains $v_{j+1}$ and $C_{j+1}$ contains $v_j$. Now $m_{B_{j+1}}$ is in both the component of $T-v_j$ containing $v_{j+1}$ and the component of $T-v_{j+1}$ containing $v_j$. These two components are disjoint, and a contradiction is obtained.

\vspace{5pt}
\textbf{Case 2.} $d(m_s,v_1)$ has the opposite parity to $s$ for each $s$.

Consider the sequence $(m'_s)$ given by $m'_s=m_{2t+1-s}$. Since $d(m'_s,v_1)$ has the opposite parity to $2t+1-s$, it has the same parity as $s$, so we know from the first case that there exists $s$ with $c_s=m'_s=m_{2t+1-s}$. By construction, $c_s=c_{2t+1-s}$, so the mouse is caught.
\end{proof}

Combining Lemma \ref{cmub} with Lemma \ref{cmlb}, we see that $m(T)=A(T)$ for any $T^*$-free tree with at least three vertices. However, we can simplify the description of $A(T)$ to obtain the following classification.

\begin{theorem}\label{cmdone}Let $T$ be a tree with $n\ge 3$ vertices, $t$ twigs and $l$ leaves. If $T$ contains $T^*$ as a subgraph, then $m(T)=\infty$. If $T$ is a star, then $m(T)=2$. Otherwise, $m(T)=2n+2t-2l-4$.
\end{theorem}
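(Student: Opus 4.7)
The plan is to dispatch the first two cases quickly and then reduce the main case to evaluating the sum $A(T)$ defined before Lemma \ref{cmlb}. If $T \supseteq T^*$, then Lemma \ref{tstar} produces a mouse sequence on $T^*$ beating any cat sequence of arbitrary length, and the same sequence works on $T$, so $m(T) = \infty$. If $T$ is a star with centre $v$, then on one hand $m(T) \ge 2$ trivially (the mouse can choose any vertex $\ne c_1$), and on the other hand the cat sequence $v,v$ is unbeatable (since $c_1 = v$ forces the mouse onto a leaf at time $1$, whence he must move to $v = c_2$); thus $m(T) = 2$.

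In the main case, $T$ is $T^*$-free and not a star, and Lemmas \ref{cmlb} and \ref{cmub} together give $m(T) = A(T)$. It therefore suffices to verify that $A(T) = 2n + 2t - 2l - 4$. I would partition $V(T)$ into the $l$ leaves, the $t$ twigs, and the $n - l - t$ remaining internal vertices $S$ (those with $\tilde{d}(v) \ge 2$). Two small observations dispose of the edge cases: since $n \ge 3$, every leaf has an internal neighbour, so $\tilde{d}(v) = 1$ for each leaf $v$; and since $T$ is not a star, no twig has $\tilde{d}(v) = 0$ (such a $v$ would be the unique internal vertex), so each twig $v$ has $\tilde{d}(v) = 1$ and contributes exactly $2$ to $A(T)$.

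The only real step is to evaluate $\sum_{v \in S}\tilde{d}(v)$, which I would do by an edge count: $T$ has $l$ internal--leaf edges, each contributing $1$ to $\sum_v \tilde{d}(v)$ from the leaf end, and $n - 1 - l$ internal--internal edges, each contributing $2$, so $\sum_v \tilde{d}(v) = 2n - 2 - l$. Subtracting the leaf and twig contributions, which equal $l$ and $t$ respectively, gives $\sum_{v \in S} \tilde{d}(v) = 2n - 2 - 2l - t$, and plugging into
\[
A(T) = 2t + \sum_{v \in S}\bigl(2\tilde{d}(v) - 2\bigr)
\]
yields the claimed value after a short simplification. There is no serious obstacle here; the only pitfall is checking that the degenerate subcases (twigs with no internal neighbour, leaves whose neighbour is itself a leaf) really are excluded by the star and $n \ge 3$ hypotheses.
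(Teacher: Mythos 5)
Your proof is correct, but in the main case it takes a genuinely different route from the paper's. The paper first invokes Lemma \ref{prune} to replace $T$ by a pruned tree $S$ on $k=n+t-l$ vertices with $m(S)=m(T)$, observes that in a pruned tree $a(v)=2d(v)-2$ for \emph{every} vertex, and then gets $A(S)=2\sum_v d(v)-2k=2k-4$ from the handshake identity. You instead evaluate $A(T)$ directly on $T$, partitioning the vertices into leaves (contributing $0$), twigs (contributing $2$ each, using that $T$ is not a star to rule out a twig with $\tilde{d}(v)=0$), and the remaining internal vertices, and computing $\sum_v\tilde{d}(v)=2(n-1-l)+l$ by classifying edges as internal--internal or internal--leaf. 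I checked the arithmetic: $A(T)=2t+2(2n-2-2l-t)-2(n-l-t)=2n+2t-2l-4$, as claimed. Your route has the advantage of making Lemma \ref{prune} and the whole pruning discussion unnecessary for the final formula (the paper needs them only here); the paper's route buys the clean intermediate statement that $A(S)=2k-4$ for any pruned tree, with the combinatorial bookkeeping pushed into the translation $k=n+t-l$. Your direct treatment of the star case (cat plays the centre twice) is also fine and matches an argument the paper gives inside Lemma \ref{cmub}, though the theorem's own proof just notes $A(T)=2$ there.
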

\begin{proof}
Lemma \ref{tstar}, together with the observation that $m(G)\ge m(H)$ if $H$ is a subgraph of $G$, gives $m(T)=\infty$ whenever $T^*$ is a subgraph of $T$. If $T$ is $T^*$-free, then $m(T)=A(T)$ by Lemmas \ref{cmub} and \ref{cmlb}. If $T$ is a star, then $A(T)=2$, as required. Our remarks following Lemma \ref{prune} showed that we may find a pruned tree $S$ with $m(T)=m(S)$ by deleting leaves adjacent to vertices of degree at least three from $T$. Since $S$ is a subgraph of $T$, it is also $T^*$-free (and has at least three vertices), so $m(T)=m(S)=A(S)$.

If $S$ is a pruned tree with $k$ vertices and $v$ is a vertex of $S$ with $d(v)\ge 3$, then $v$ has no leaves as neighbours, so $d(v)=\tilde{d}(v)$. Consequently, if $d(v)\ge 3$ or $d(v)=\tilde{d}(v)=2$, then $a(v)=2\tilde{d}(v)-2=2d(v)-2$. If $d(v)=2$ and $\tilde{d}(v)<2$, then $a(v)=2=2d(v)-2$. If $d(v)=1$, then $a(v)=0=2d(v)-2$. So $a(v)=2d(v)-2$ for each vertex $v$. Consequently, $A(S)=\sum_va(v)=2\sum_vd(v)-2k$. Since $S$ is a tree, $\sum_vd(v)=2k-2$ and $A(S)=2k-4$. As noted earlier, $S$ has $n+t-l$ vertices when $T$ has $n$ vertices, $t$ twigs, and $l$ leaves, so $m(T)=A(S)=2n+2t-2l-4$, as required.
\end{proof}

\section{Concluding remarks}

Theorem \ref{cmdone} completely solves the problem for a single cat and mouse. A natural extension would be to ask for a classification of graphs on which two (or $k$) cats can co-operate to catch the mouse.

\end{document}